\newcommand{\sysn}{\left\{\begin{array}{rcl}}
\newcommand{\sysk}{\end{array}\right.}
\newcommand{\ingrw}[2]{\includegraphics[width=#1mm]{#2}}
\newtheorem{theorem}{Theorem}[section]
\theoremstyle{example}
\newtheorem{proposition}[theorem]{Proposition}
\theoremstyle{definition}
\newtheorem{definition}[theorem]{Definition}
\newtheorem{remark}[theorem]{Remark}
\newtheorem{corollary}[theorem]{Corollary}
\journal{Topology and its Applications}
\begin{document}

\begin{frontmatter}



\title{Indestructibly productively Lindel$\ddot{o}$f and Menger function spaces}


\author{Alexander V. Osipov}

\ead{OAB@list.ru}


\address{Krasovskii Institute of Mathematics and Mechanics, Ural Federal
 University,

 Ural State University of Economics, Yekaterinburg, Russia}

\begin{abstract}
For a Tychonoff space $X$ and a family $\lambda$ of subsets of
$X$, we denote by $C_{\lambda}(X)$ the $T_1$-space of all
real-valued continuous functions on $X$ with the $\lambda$-open
topology.

A topological space is productively Lindel$\ddot{o}$f if its
product with every Lindel$\ddot{o}$f space is Lindel$\ddot{o}$f. A
space is indestructibly productively Lindel$\ddot{o}$f if it is
productively Lindel$\ddot{o}$f in any extension by countably
closed forcing. A Menger space is a topological space in which for
every sequence of open covers $\mathcal{U}_1, \mathcal{U}_2,...$
of the space there are finite sets $\mathcal{F}_1\subset
\mathcal{U}_1, \mathcal{F}_2\subset \mathcal{U}_2, ...$ such that
family $\mathcal{F}_1\cup \mathcal{F}_2\cup ...$ covers the space.

In this paper, we study indestructibly productively
Lindel$\ddot{o}$f and Menger function spaces. In particular, we
proved that the following statements are equivalent for a
$T_1$-space $C_{\lambda}(X)$:

(1) $C_{\lambda}(X)$ is indestructibly productively
Lindel$\ddot{o}$f;

(2) $C_{\lambda}(X)$ is metrizable Menger;

(3) $C_{\lambda}(X)$ is metrizable $\sigma$-compact;

(4)  $X$ is pseudocompact, $D(X)$ is a dense $C^*$-embedded set in
$X$ and the family $\lambda$ consists of all finite subsets of
$D(X)$, where $D(X)$ is the countable set of all isolated points
of $X$;

(5) $C_{\lambda}(X)$ is homeomorphic to $C_p^*(\mathbb{N})$.

\end{abstract}

\begin{keyword} Menger property \sep Hurewicz property \sep  set-open topology \sep
$\sigma$-compact \sep  function space \sep indestructibly
Lindel$\ddot{o}$f  \sep productively Lindel$\ddot{o}$f \sep
indestructibly productively Lindel$\ddot{o}$f \sep selection
principles


\MSC[2010]  54C25 \sep 54C35 \sep 54C40

\end{keyword}

\end{frontmatter}



\section{Introduction}
A space $X$ is said to be Menger \cite{hur} (or, \cite{sash}) ($X$
satisfies $S_{fin}(\mathcal{O}, \mathcal{O})$) if for every
sequence $(\mathcal{U}_n : n\in \mathbb{N})$ of open covers of
$X$, there are finite subfamilies $\mathcal{V}_n\subset
\mathcal{U}_n$ such that $\bigcup \{\mathcal{V}_n : n\in
\mathbb{N} \}$ is a cover of $X$.

Note that every $\sigma$-compact space is Menger, and a Menger
space is Lindel$\ddot{o}$f. The Menger property is closed
hereditary, and it is preserved by continuous maps. It is well
known that the Baire space $\mathbb{N}^{\mathbb{N}}$ (hence,
$\mathbb{R}^{\omega}$) is not Menger.

Menger conjectured that in {\bf ZFC} every Menger metric space is
$\sigma$-compact. Fremlin and Miller \cite{mi} proved that
Menger's conjecture is false, by showing that there is, in {\bf
ZFC}, a set of real numbers that is Menger but not
$\sigma$-compact.

For a function space $C_p(X)$: Velichko proved that $C_p(X)$ is
$\sigma$-compact iff $X$ is finite and Arhangel'skii proved that
$C_p(X)$ is Menger iff $X$ is finite \cite{arh21}.

For a function space $C_{\lambda}(X)$  the situation is more
interesting.

\begin{theorem}(Nokhrin)\label{nox}(Theorem 5.13 in \cite{nokh})
 For a space $X$ the following statements are
equivalent:

\begin{enumerate}
\item $C_{\lambda}(X)$ is $\sigma$-compact;

\item $X$ is pseudocompact, $D(X)$ is a dense $C^*$-embedded set
in $X$ and the family $\lambda$ consists of all finite subsets of
$D(X)$, where $D(X)$ is the set of all isolated points of $X$.
\end{enumerate}

\end{theorem}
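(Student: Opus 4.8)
The plan is to prove the two implications separately, treating (2)$\Rightarrow$(1) as the routine direction and (1)$\Rightarrow$(2) as the substantive one. For (2)$\Rightarrow$(1): since $\lambda$ consists of all finite subsets of $D(X)$, the $\lambda$-open topology is exactly the topology of pointwise convergence on $D(X)$, and because every point of $D(X)$ is isolated, $D(X)$ is discrete with $C(D(X))=\mathbb{R}^{D(X)}$. I would consider the restriction map $\pi:C_\lambda(X)\to\mathbb{R}^{D(X)}$, $\pi(f)=f|_{D(X)}$, and first check that $\pi$ is a homeomorphism onto its image: injectivity follows since $D(X)$ is dense and the functions are continuous, while $\pi$ is an embedding because the domain carries, by construction, the pointwise topology on $D(X)$. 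Next I would identify the image: pseudocompactness of $X$ forces each $f$, hence each $f|_{D(X)}$, to be bounded, while $C^*$-embedding of $D(X)$ guarantees that every bounded function on $D(X)$ is a restriction; thus $\pi(C_\lambda(X))=C_p^*(D(X))=\bigcup_{m\in\mathbb{N}}[-m,m]^{D(X)}$. Each cube $[-m,m]^{D(X)}$ is compact by Tychonoff's theorem, so the image is $\sigma$-compact, giving (1) (and, when $D(X)$ is countable, the homeomorphism with $C_p^*(\mathbb{N})$).

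For (1)$\Rightarrow$(2) I would argue in stages. First, $\sigma$-compactness makes $C_\lambda(X)$ Lindel\"of, and combined with the fact that it is a topological vector space under pointwise operations this imposes strong rigidity. I would show that every member of $\lambda$ is finite: an infinite $A\in\lambda$ contributes a ``uniform convergence on $A$'' component to the topology, under which the basic neighbourhoods of $0$ behave like balls of $\ell^\infty$ and fail to be relatively compact, so $C_\lambda(X)$ cannot be covered by countably many compacta. I would likewise show that $\bigcup\lambda$ consists of isolated points, since a non-isolated point in $\bigcup\lambda$ ties together the values of $f$ along a convergent sequence and cuts the image of the restriction map down to a proper, non-closed, hence non-$\sigma$-compact subspace, as happens for the space of convergent sequences. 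The $T_1$ hypothesis then forces $\bigcup\lambda$ to be dense: otherwise two distinct continuous functions agreeing on $\bigcup\lambda$ could not be separated by any $\lambda$-open set. Since an isolated point lies in the closure of a set only if it lies in the set, density upgrades to $\bigcup\lambda=D(X)$, the set of all isolated points, and the generated topology is precisely pointwise convergence on $D(X)$, so $\lambda=[D(X)]^{<\omega}$ and $C_\lambda(X)\cong R:=\{f|_{D(X)}:f\in C(X)\}\subseteq\mathbb{R}^{D(X)}$.

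The main obstacle is the final step: translating $\sigma$-compactness of the linear subspace $R$ into pseudocompactness of $X$ together with $C^*$-embedding of $D(X)$, i.e.\ showing that $R=\bigcup_{m}[-m,m]^{D(X)}$ is exactly the set of all bounded functions. For pseudocompactness I would argue by contradiction: an unbounded $f\in C(X)$ restricts to an unbounded element of $R$ (as $D(X)$ is dense), from which I would extract a closed copy of a non-$\sigma$-compact space inside $R$. For $C^*$-embedding I would show that if some bounded $g\in\mathbb{R}^{D(X)}$ were not a restriction, then $R$ would be a proper dense linear subspace of $\bigcup_{m}[-m,m]^{D(X)}$ that is not $F_\sigma$, contradicting that a $\sigma$-compact subspace of $\mathbb{R}^{D(X)}$ is a countable union of compact sets, each contained in a closed cube; this parallels the convergent-sequences example. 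Assembling these yields $R=C_p^*(D(X))$ and hence all of (2). The delicate point throughout is that $\sigma$-compactness is a covering property whereas the target conditions are algebraic and embedding properties, so the extraction must be routed through the compact pieces $K_n$ and the boundedness of each coordinate projection on them.
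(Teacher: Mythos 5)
A preliminary remark: the paper does not prove this theorem at all --- it is imported verbatim as Theorem 5.13 of Nokhrin's paper \cite{nokh} --- so your attempt can only be measured against the mathematics it actually requires, not against an in-paper argument. Your direction $(2)\Rightarrow(1)$ is essentially correct and complete in outline: with $\lambda=[D(X)]^{<\omega}$ the $\lambda$-open topology is the topology of pointwise convergence on $D(X)$, the restriction map is an embedding (injective by density of $D(X)$), pseudocompactness makes every restriction bounded, $C^*$-embedding makes every bounded function a restriction, and $\bigcup_{m}[-m,m]^{D(X)}$ is $\sigma$-compact by Tychonoff. This is the routine half.

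The substantive direction $(1)\Rightarrow(2)$, however, is a plan whose load-bearing steps rest on claims that are false in general. (i) You assume $C_{\lambda}(X)$ is a topological vector space under pointwise operations; for set-open topologies this fails in general --- determining exactly when $C_{\lambda}(X)$ is a topological group or TVS is the subject of the author's own cited papers \cite{os,os1} --- so no ``rigidity'' may be drawn from it. (ii) Your argument that an infinite $A\in\lambda$ destroys $\sigma$-compactness is doubly flawed: the sets $[A,U]$ do not induce the topology of uniform convergence on $A$ (the two coincide only under extra hypotheses on $A$, cf. \cite{noos}), and even granting $\ell^{\infty}$-like neighbourhoods of $0$, ``neighbourhoods of $0$ are not relatively compact'' does not imply ``not $\sigma$-compact'': the spaces $\mathbb{Q}$ and $\bigcup_{m}[-m,m]^{\omega}\subseteq\mathbb{R}^{\omega}$ are $\sigma$-compact yet nowhere locally compact, so this inference is a non sequitur and must be replaced by a genuine covering or Baire-category argument. (iii) Your argument that $\bigcup\lambda$ consists of isolated points uses convergent sequences, which need not exist at a non-isolated point of a Tychonoff space (e.g. points of $\beta\mathbb{N}\setminus\mathbb{N}$). (iv) Most seriously, in the final step you claim that if some bounded $g$ on $D(X)$ had no continuous extension, then $R=\{f|_{D(X)}:f\in C(X)\}$ would be a proper dense linear subspace of the bounded functions and therefore not $F_{\sigma}$; this general principle is false: the finitely supported functions together with the constants form a proper dense linear subspace of $\bigcup_{m}[-m,m]^{D(X)}$ that \emph{is} $\sigma$-compact. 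What actually excludes such behaviour is that $R$ is uniformly closed in $\ell^{\infty}(D(X))$ (a uniformly Cauchy sequence of restrictions of continuous functions converges uniformly on $X=\overline{D(X)}$ to a continuous function), a property you never invoke; likewise pseudocompactness needs an actual construction, for instance composing an unbounded $f\in C(X)$ with arbitrary $h\in C(\mathbb{R})$ to show that $R$ maps continuously onto $\mathbb{R}^{\omega}$, which no $\sigma$-compact space can do. As written, $(1)\Rightarrow(2)$ is a sketch supported by false lemmas, not a proof.
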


\begin{theorem}(Osipov)\label{os1} (Theorem 3.4 in \cite{os3})
 A space $C_{\lambda}(X)$ is Menger iff  it is
$\sigma$-compact.
\end{theorem}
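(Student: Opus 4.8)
The implication that every \(\sigma\)-compact space is Menger is the general fact recalled in the introduction, so the entire content of the theorem is the converse: a Menger \(C_{\lambda}(X)\) is \(\sigma\)-compact. My plan is not to build a \(\sigma\)-compact decomposition by hand, but to assume \(C_{\lambda}(X)\) Menger and to verify that the pair \((X,\lambda)\) satisfies condition (2) of Nokhrin's Theorem~\ref{nox}; \(\sigma\)-compactness then follows by quoting that theorem. Throughout I would use the Menger hypothesis only as a source of forbidden subspaces, relying on three facts from the introduction: a Menger space is Lindel\"of, the Menger property passes to closed subspaces and continuous images, and \(\mathbb{N}^{\mathbb{N}}\) (hence \(\mathbb{R}^{\omega}\)) is not Menger. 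Equivalently, I would argue the contrapositive inside condition (2): whenever one of its clauses fails, \(C_{\lambda}(X)\) contains a closed copy, or admits a continuous image, homeomorphic to \(\mathbb{N}^{\mathbb{N}}\) or \(\mathbb{R}^{\omega}\), and so cannot be Menger.

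The first task is to identify \(\lambda\). Since \(C_{\lambda}(X)\) is assumed \(T_{1}\), the set \(\bigcup\lambda\) is dense in \(X\). The subbasic set-open neighbourhood \([A,U]=\{f:f(A)\subseteq U\}\) constrains a function \emph{uniformly} over each \(A\in\lambda\); consequently, if some \(A\in\lambda\) is infinite or contains a non-isolated point, the uniform control over \(A\) lets one read off an infinite, independently varying family of real parameters confined to an arbitrarily small neighbourhood of the zero function, producing a closed copy of \(\mathbb{R}^{\omega}\) or of \(\mathbb{N}^{\mathbb{N}}\) and contradicting the Menger property. Ruling these out forces every member of \(\lambda\) to be a finite set of isolated points, and, together with density, \(\bigcup\lambda=D(X)\) is the set of all isolated points with \(\lambda\) the family of its finite subsets. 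Because all singletons \(\{d\}\subseteq D(X)\) then lie in \(\lambda\), the \(\lambda\)-topology coincides on \(C(X)\) with the topology of pointwise convergence on \(D(X)\), and the restriction map \(f\mapsto f|_{D(X)}\) is a homeomorphism of \(C_{\lambda}(X)\) onto the subspace \(C(X)|_{D(X)}\) of \(\mathbb{R}^{D(X)}\) with its product topology. This reduces everything to a covering analysis inside a power of the real line.

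It remains to see that \(X\) is pseudocompact and \(D(X)\) is \(C^{*}\)-embedded. If \(X\) were not pseudocompact, some \(f\in C(X)\) would be unbounded, and by restricting attention to a discrete sequence of isolated points along which the available functions are unbounded and independent one obtains a closed copy of \(\mathbb{R}^{\omega}\) (or \(\mathbb{N}^{\mathbb{N}}\)) inside \(C(X)|_{D(X)}\), again impossible. Hence every continuous function is bounded on \(D(X)\) and \(C(X)|_{D(X)}\subseteq\ell^{\infty}(D(X))=\bigcup_{n}[-n,n]^{D(X)}\). Finally, if \(D(X)\) were not \(C^{*}\)-embedded, then \(C(X)|_{D(X)}\) would be a proper subspace of \(\ell^{\infty}(D(X))\), and this deficiency should again be converted into a closed non-Menger copy of \(\mathbb{N}^{\mathbb{N}}\); its absence forces \(C(X)|_{D(X)}=\ell^{\infty}(D(X))\), that is, \(D(X)\) is \(C^{*}\)-embedded. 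Now all clauses of Theorem~\ref{nox}(2) hold, so \(C_{\lambda}(X)\) is \(\sigma\)-compact---indeed \(C_{\lambda}(X)\cong\bigcup_{n}[-n,n]^{D(X)}\).

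The delicate part throughout is the manufacture of the witnessing copies of \(\mathbb{N}^{\mathbb{N}}\) or \(\mathbb{R}^{\omega}\). Because the set-open topology controls functions uniformly over members of \(\lambda\) rather than coordinatewise, one must site the independent coordinates along a discrete, clopen-separated (and suitably \(C^{*}\)-embedded) sequence of isolated points so that the induced topology on the constructed subspace is genuinely the product, equivalently the Baire, topology, and one must check that the subspace is \emph{closed} in \(C_{\lambda}(X)\) and not merely embedded. I expect the hardest single case to be the \(C^{*}\)-embedding clause, where the obstruction must be built not from a generic subset of a power of \(\mathbb{R}\) but from the algebraically constrained image \(C(X)|_{D(X)}\); this is exactly where a naive Menger-but-not-\(\sigma\)-compact subspace of Fremlin--Miller type must be excluded by exploiting that the subspace is the restriction of \emph{all} of \(C(X)\). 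Once these embeddings are in place, the remainder is bookkeeping across the failure cases and a single invocation of Theorem~\ref{nox}.
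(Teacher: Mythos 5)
First, note that the paper contains no proof of Theorem~\ref{os1} to compare against: it is quoted verbatim from Theorem~3.4 of \cite{os3}, so your proposal has to stand entirely on its own. Its skeleton is sensible --- prove the Menger $\Rightarrow$ $\sigma$-compact direction by verifying the three clauses of Nokhrin's Theorem~\ref{nox}(2) and quoting that theorem, arguing each clause contrapositively via the facts that Menger passes to closed subspaces and continuous images and that $\mathbb{N}^{\mathbb{N}}$ and $\mathbb{R}^{\omega}$ are not Menger. Moreover, one of the three clauses genuinely works this way: if $X$ is not pseudocompact, a locally finite sequence of nonempty open sets meets the dense set of isolated points in a closed discrete set $\{d_n\}$, the functions $\sum_n a_n\chi_{\{d_n\}}$ are continuous, and their restrictions to $D(X)$ form a closed copy of $\mathbb{R}^{\omega}$. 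That part of your plan is correct and standard.

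The other two clauses, however, are genuine gaps, not deferred bookkeeping. (i) For the claim that an infinite $A\in\lambda$, or an $A$ containing a non-isolated point, forces a closed copy of $\mathbb{R}^{\omega}$ or $\mathbb{N}^{\mathbb{N}}$, you give no construction, and the mechanism you describe is internally inconsistent: parameters ``confined to an arbitrarily small neighbourhood of the zero function'' are bounded, whereas a closed copy of $\mathbb{R}^{\omega}$ requires unbounded independent variation; worse, the trace of a subbasic set $[A,U]$ with $A$ infinite on any candidate product subspace is not product-open, so the uniform control obstructs rather than enables such embeddings. A concrete test case is $X=[0,1]$, $\lambda=[X]^{<\omega}$: your step must then exhibit a closed copy of $\mathbb{R}^{\omega}$ or $\mathbb{N}^{\mathbb{N}}$ inside $C_p([0,1])$, but $C_p([0,1])$ is $\sigma$-bounded (it is the union of the closed bounded sets $C([0,1],[-n,n])$, since $[0,1]$ is pseudocompact) and is not analytic, so neither unbounded product copies nor a Hurewicz-type dichotomy is available; this case is exactly Arhangel'skii's theorem that $C_p(X)$ Menger implies $X$ finite, whose known proofs use covering, fan-tightness or game arguments on the function space rather than forbidden closed subspaces. (ii) The $C^*$-embedding clause you explicitly leave open (``should again be converted\dots'', ``I expect the hardest single case''), and it is the crux of the whole theorem: there $C_{\lambda}(X)\cong C(X)|_{D(X)}$ is a proper dense linear subspace of the $\sigma$-compact space $\bigcup_n[-n,n]^{D(X)}$ (e.g.\ for a Mr\'{o}wka $\Psi$-space over a MAD family $\mathcal{A}$ it is $\{f\in\ell^{\infty}:\lim_{n\in A}f(n)\text{ exists for all }A\in\mathcal{A}\}$), and by Fremlin--Miller \cite{mi} properness alone can never defeat the Menger property; such spaces are in general not analytic, so nothing forces them to contain a closed copy of $\mathbb{N}^{\mathbb{N}}$ at all. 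What is needed here is a specific use of the function-space structure --- for instance a continuous map onto a dominating subspace of $\mathbb{N}^{\mathbb{N}}$ manufactured from a pair of subsets of $D(X)$ that are not completely separated in $X$, or a winning strategy for ONE in the Menger game --- and your proposal supplies none. As written, your argument establishes only the pseudocompactness clause of Nokhrin's condition.
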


 Various properties between $\sigma$-compactness and the Menger property are
 investigated in the papers \cite{dtz,tall1,tall} and others. We continue to study these
 properties on a function $T_1$-space $C(X)$ with the set-open topology.

\medskip

\section{Main definitions and notation}

Throughout this paper $X$ will be a Tychonoff space.  Let
$\lambda$ be a family nonempty subsets of $X$ and let $C(X)$ be a
set of all continuous real-valued functions on $X$. Denote by
$C_{\lambda}(X)$  the set $C(X)$ is endowed with the
$\lambda$-open topology.
 The elements of the standard subbases of the set-open
 topology:

 $[F,\,U]=\{f\in C(X):\ f(F)\subseteq U\}$,  where $F\in\lambda$, $U$ is an open subset  of the real line $\mathbb
 R$.

 Note that if $\lambda$ consists of all
finite (compact) subsets of $X$ then the $\lambda$-open topology
coincides with the topology of pointwise convergence (the
compact-open topology), that is $C_{\lambda}(X)=C_{p}(X)$
($C_{\lambda}(X)=C_{k}(X)$). The set-open topology was first
introduced by Arens and Dugundji in \cite{ardu} and studied over
the last years by many authors. We continue to study the different
topological properties of the space $C(X)$ with the set-open
topology (see [21-27]).


For a topological property $\mathcal{P}$, A.V. Arhangel'skii calls
$X$ {\it projectively $\mathcal{P}$} if every second countable
image of $X$ is $\mathcal{P}$. Arhangel'skii consider projective
$\mathcal{P}$ for $\mathcal{P}=\sigma$-compact, analytic and other
properties \cite{arh}. The projective selection principles were
introduced and first time considered in \cite{koc}. Lj.D.R.
Ko$\check{c}$inac characterized the classical covering properties
of Menger, Rothberger, Hurewicz and Gerlits-Nagy in term of
continuous images in $\mathbb{R}^{\omega}$.

\medskip

\begin{theorem}(Ko$\check{c}$inac)\label{koc} A space is Menger if and only if it is Lindel$\ddot{o}$f
and projectively Menger.
\end{theorem}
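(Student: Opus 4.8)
The plan is to prove the two implications separately: the forward direction is immediate from the properties of the Menger property already recorded in the introduction, while the reverse direction requires a construction that reduces an arbitrary sequence of open covers to a single continuous image in $\mathbb{R}^{\omega}$.

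For the forward direction, suppose $X$ is Menger. Since a Menger space is Lindel$\ddot{o}$f, $X$ is Lindel$\ddot{o}$f. Moreover the Menger property is preserved by continuous maps, so every continuous image of $X$ is Menger; in particular every second countable continuous image is Menger, i.e. $X$ is projectively Menger. Thus the implication from Menger to the conjunction of Lindel$\ddot{o}$f and projectively Menger costs no extra work.

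For the reverse direction, assume $X$ is Lindel$\ddot{o}$f and projectively Menger, and let $(\mathcal{U}_n : n\in\mathbb{N})$ be a sequence of open covers of $X$. First I would use Lindel$\ddot{o}$fness together with the Tychonoff hypothesis to replace each $\mathcal{U}_n$ by a countable cozero refinement: for each point $x$ pick a continuous $f\colon X\to[0,1]$ with $f(x)>0$ and $\{f\neq 0\}$ contained in some member of $\mathcal{U}_n$, then extract a countable subcover from these cozero sets to obtain a family $\{V_{n,m} : m\in\mathbb{N}\}$ covering $X$, where each $V_{n,m}=\{f_{n,m}\neq 0\}$ lies inside some $U_{n,k(n,m)}\in\mathcal{U}_n$. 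Collecting all the functions into a single map $F=(f_{n,m})_{n,m}\colon X\to\mathbb{R}^{\omega}$ yields a continuous map whose image $Y=F(X)$ is second countable.

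Finally I would transport the problem to $Y$. Since $X$ is projectively Menger and $Y$ is a second countable continuous image, $Y$ is Menger. For each $n$ the sets $\widehat{V}_{n,m}=\{y\in Y : y_{(n,m)}\neq 0\}$ form an open cover of $Y$ (because $\{V_{n,m}\}_m$ covers $X$ and $F^{-1}(\widehat{V}_{n,m})=V_{n,m}$), so applying the Menger property of $Y$ to the sequence $(\{\widehat{V}_{n,m}\}_m)_n$ produces finite subfamilies whose union covers $Y$. Pulling these back through the surjection $F$ and replacing each selected $V_{n,m}$ by the corresponding $U_{n,k(n,m)}$ produces finite subfamilies $\mathcal{V}_n\subset\mathcal{U}_n$ whose union covers $X$, which is exactly a Menger selection. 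The main obstacle is the first step of the reverse direction: one must capture the whole countable sequence of covers by a single continuous map into a second countable space so that a Menger selection downstairs pulls back to a Menger selection upstairs, and this is precisely where both hypotheses are used, Lindel$\ddot{o}$fness to keep each refined cover countable and the Tychonoff property to realize the cover members as cozero sets serving as coordinates of $F$.
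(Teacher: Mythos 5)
Your proof is correct. Note, however, that the paper does not prove this statement at all: it is quoted as a known result of Ko\v{c}inac (reference \cite{koc}), so there is no internal proof to compare against. Your argument is essentially the standard one from the literature on projective selection principles: the forward direction follows from the two facts already recorded in the paper's introduction (Menger implies Lindel\"{o}f, and Menger is preserved by continuous maps), and the reverse direction encodes a countable sequence of covers into a single continuous map to $\mathbb{R}^{\omega}$ by taking, for each cover, a countable cozero refinement (using the standing Tychonoff hypothesis plus Lindel\"{o}fness) and using the witnessing functions as coordinates, then transporting the Menger selection back along the surjection. The only point worth polishing is a definitional triviality: the paper insists covers be nontrivial ($X\notin\mathcal{U}$), so if some $\widehat{V}_{n,m}$ equals all of $Y$ you should simply select that single element as the finite subfamily for that $n$; this does not affect the substance of the argument.
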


Recall that, if $X$ is a topological space and $\mathcal{P}$ is a
topological property, we say that $X$ is $\sigma$-$\mathcal{P}$ if
$X$ is the countable union of subspaces with the property
$\mathcal{P}$. So a space $X$ is called $\sigma$-compact
($\sigma$-pseudocompact, $\sigma$-bounded), if
$X=\bigcup\limits_{i=1}^{\infty}X_i$, where $X_i$ is a compact
(pseudocompact, bounded) for every $i\in \mathbb N$.

 A subset $A$ of a space $X$ is said to be {\it
 bounded} in $X$ if for every continuous function $f: X \mapsto
 \mathbb{R}$, $f\vert A: A\mapsto \mathbb{R}$ is a bounded
 function. Every $\sigma$-bounded space is projectively Menger
 (Proposition 1.1 in \cite{arh}).

\medskip

Recall that a family $\lambda$ of nonempty subsets of a
topological space $(X,\tau)$ is called a $\pi$-network for $X$ if
for any nonempty open set $U\in\tau$ there exists $A\in \lambda$
such that $A\subset U$.

By Theorem 4.1 in \cite{nokh}, the space $C_{\lambda}(X)$ is a
$T_1$-space (=Hausdorff space) iff $\lambda$ is a $\pi$-network of
$X$.

Throughout this paper, a family $\lambda$ of nonempty subsets of
the set $X$ is
  a $\pi$-network.

Many topological properties are defined or characterized in terms
 of the following classical selection principles.
 Let $\mathcal{A}$ and $\mathcal{B}$ be sets consisting of
families of subsets of an infinite set $X$. Then:

$S_{1}(\mathcal{A},\mathcal{B})$ is the selection hypothesis: for
each sequence $( A_{n}: n\in \mathbb{N})$ of elements of
$\mathcal{A}$ there is a sequence $( b_{n}: n\in \mathbb{N})$ such
that for each $n$, $b_{n}\in A_{n}$, and $\{b_{n}: n\in\mathbb{N}
\}$ is an element of $\mathcal{B}$.

$S_{fin}(\mathcal{A},\mathcal{B})$ is the selection hypothesis:
for each sequence $( A_{n}: n\in \mathbb{N} )$ of elements of
$\mathcal{A}$ there is a sequence $( B_{n}: n\in \mathbb{N})$ of
finite sets such that for each $n$, $B_{n}\subseteq A_{n}$, and
$\bigcup_{n\in\mathbb{N}}B_{n}\in\mathcal{B}$.

$U_{fin}(\mathcal{A},\mathcal{B})$ is the selection hypothesis:
whenever $\mathcal{U}_1$, $\mathcal{U}_2, ... \in \mathcal{A}$ and
none contains a finite subcover, there are finite sets
$\mathcal{F}_n\subseteq \mathcal{U}_n$, $n\in \mathbb{N}$, such
that $\{\bigcup \mathcal{F}_n : n\in \mathbb{N}\}\in \mathcal{B}$.

In this paper, by a cover we mean a nontrivial one, that is,
$\mathcal{U}$ is a cover of $X$ if $X=\bigcup \mathcal{U}$ and
$X\notin \mathcal{U}$.

 An open cover $\mathcal{U}$ of a space $X$ is:

 $\bullet$ an {\it $\omega$-cover} if every finite subset of $X$ is contained in a
 member of $\mathcal{U}$;

$\bullet$ a {\it $\gamma$-cover} if it is infinite and each $x\in
X$ belongs to all but finitely many elements of $\mathcal{U}$.

For a topological space $X$ we denote:

$\bullet$ $\mathcal{O}$ --- the family of open covers of $X$;

$\bullet$ $\Gamma$ --- the family of open $\gamma$-covers of $X$;

$\bullet$ $\Omega$ --- the family of open $\omega$-covers of $X$.

Many equivalences hold among these properties, and the surviving
ones appear in the following Diagram (where an arrow denotes
implication), to which no arrow can be added except perhaps from
$U_{fin}(\Gamma, \Gamma)$ or $U_{fin}(\Gamma, \Omega)$ to
$S_{fin}(\Gamma, \Omega)$ \cite{jmss}.

\bigskip

\begin{center}
\ingrw{90}{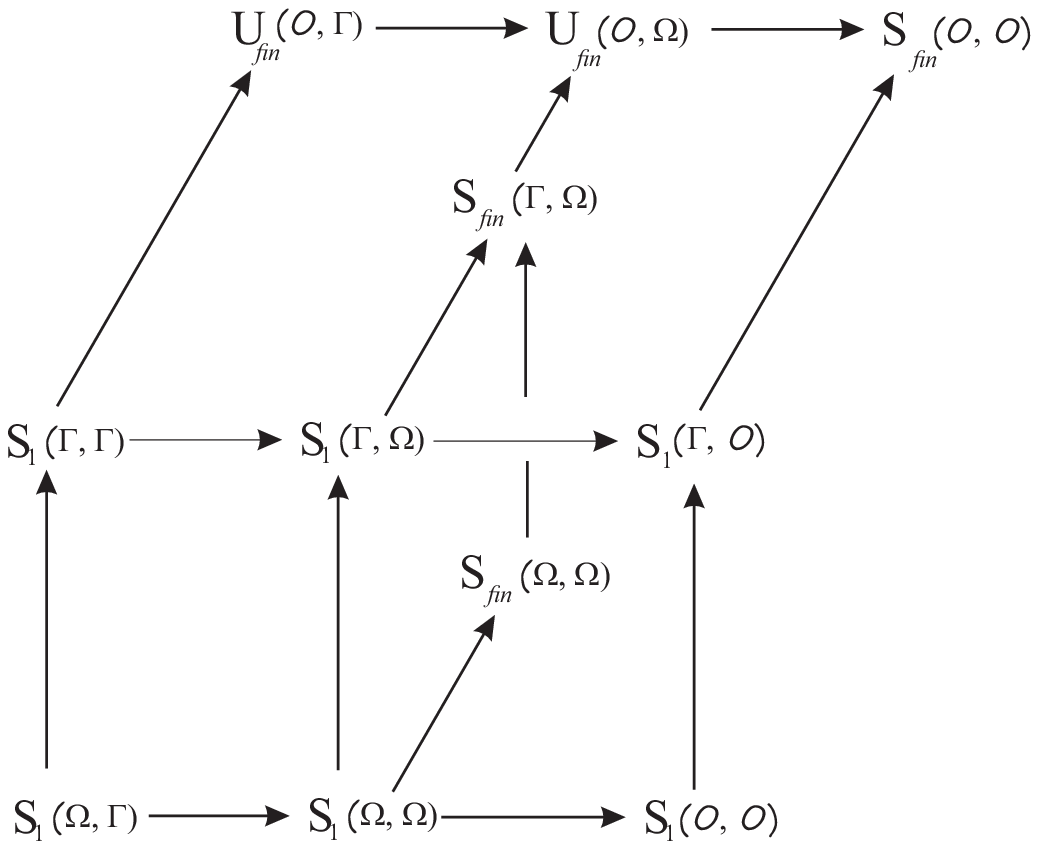}

\medskip

Figure~1. The Scheepers Diagram for Lindel$\ddot{o}$f spaces.

\end{center}
\bigskip

\begin{definition} A topological space $X$ is

$\bullet$ {\it Indestructibly Lindel$\ddot{o}$f} if it is
Lindel$\ddot{o}$f in every countably closed forcing extension
\cite{tall4,auta}.

$\bullet$ {\it Productively Lindel$\ddot{o}$f} if $X\times Y$ is
Lindel$\ddot{o}f$ for any Lindel$\ddot{o}$f space $Y$ \cite{bkr}.

$\bullet$ {\it Indestructibly productively Lindel$\ddot{o}$f} if
it is productively Lindel$\ddot{o}$f in any extension by countably
closed forcing \cite{auta}.

$\bullet$ {\it Powerfully Lindel$\ddot{o}$f} if its $\omega$th
power is Lindel$\ddot{o}$f \cite{al,auta}.

$\bullet$ {\it Alster} if every cover by $G_{\delta}$ sets that
covers each compact set finitely includes a countable subcover
\cite{al}.

$\bullet$ {\it Rothberger} ($X$ satisfies $S_1(\mathcal{O},
\mathcal{O})$) if for each sequence of open covers
$\{\mathcal{U}_n\}_{n<\omega}$, there are $\{U_n\}_{n<\omega}$,
$U_n\in \mathcal{U}_n$, such that $\{U_n\}_{n<\omega}$ is a cover
\cite{rot}.

$\bullet$ {\it Hurewicz} ($X$ satisfies $U_{fin}(\mathcal{O},
\Gamma)$) if for each sequence $\{\mathcal{U}_n : n<\omega \}$ of
$\gamma$-covers, there is for each $n$ a finite
$\mathcal{V}_n\subset\mathcal{U}_n$ such that either $\{\bigcup
\mathcal{V}_n : n<\omega \}$ is a $\gamma$-cover, or else for some
$n$, $\mathcal{V}_n$ is a cover \cite{hur,hur1}.

\end{definition}

\begin{definition}
We play a {\it Menger game} ($M$-game) in which ONE chooses in the
$n$th inning an open cover $\mathcal{U}_n$ and TWO choses a finite
$\mathcal{V}_n\subset\mathcal{U}_n$. TWO wins if $\{\bigcup
\mathcal{V}_n : n<\omega\}$ covers $X$.
\end{definition}

Hurewicz proved $X$ is Menger if and only if ONE has no winning
strategy \cite{hur}.

\medskip

The Tall's Diagram in Figure 2 (see Diagram  in \cite{tall}) below
shows the relationships among the properties we have discussed in
this article.

\begin{center}
\ingrw{90}{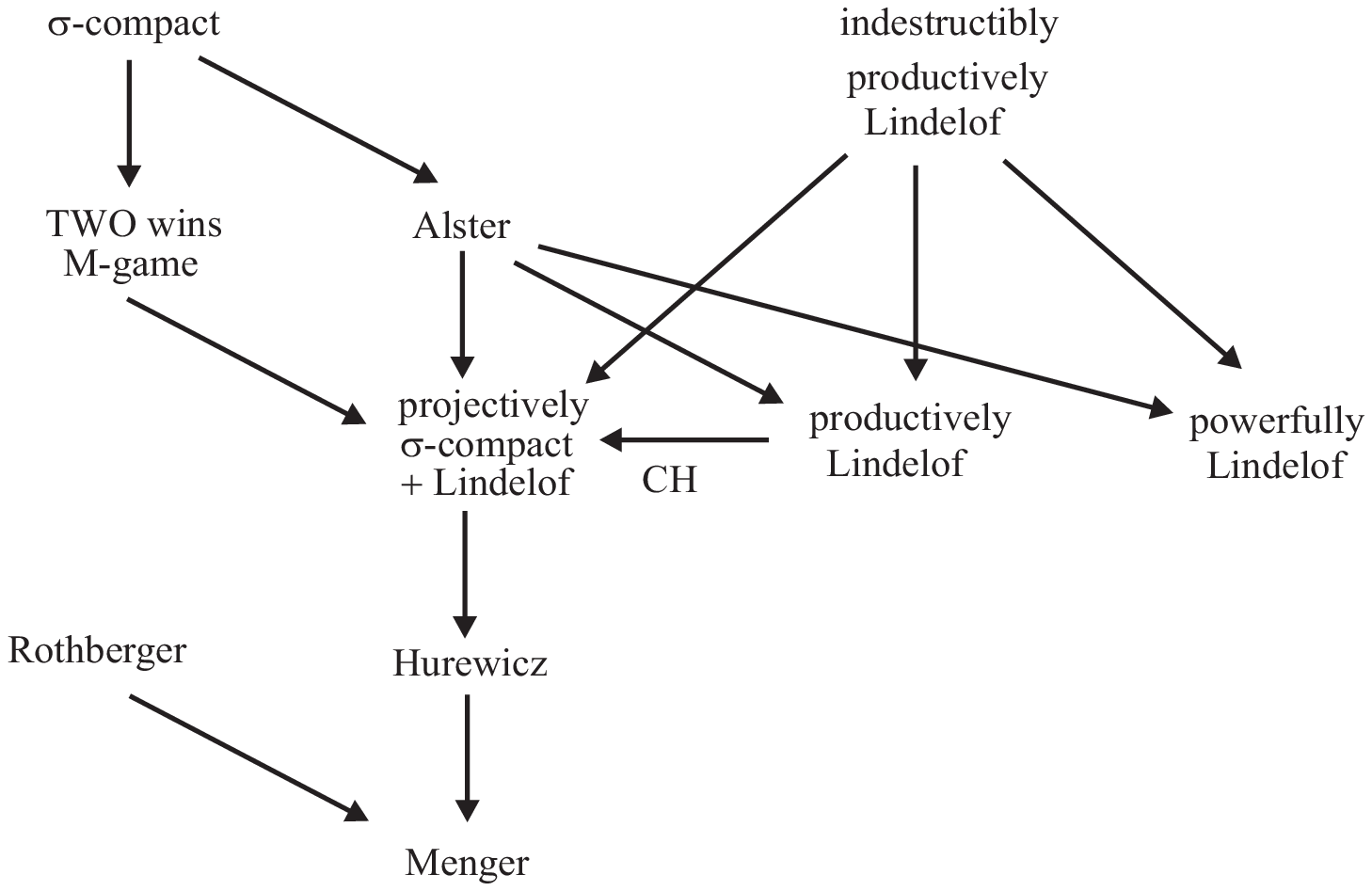}

\medskip

Figure~2. The Tall's Diagram for Lindel$\ddot{o}$f spaces.

\end{center}

\begin{theorem}\label{os2}

For a space $X$ the following statements are equivalent:

\begin{enumerate}

\item $C_{\lambda}(X)$ is $\sigma$-compact;

\item $C_{\lambda}(X)$ is Alster;

\item $(CH)$ $C_{\lambda}(X)$ is productively Lindel$\ddot{o}$f;

\item "TWO wins $M$-game" for $C_{\lambda}(X)$;

\item $C_{\lambda}(X)$ is projectively $\sigma$-compact and
Lindel$\ddot{o}$f;

\item $C_{\lambda}(X)$ is Hurewicz;

\item  $C_{\lambda}(X)$ is Menger;

\item $X$ is a pseudocompact, $D(X)$ is a dense $C^*$-embedded set
in $X$ and family $\lambda$ consists of all finite subsets of
$D(X)$, where $D(X)$ is the set of all isolated points of $X$;

\item  $(C_{\lambda}(X))^n$ is Menger for every $n\in \mathbb{N}$;

\item $C_{\lambda}(X)$ satisfies $S_{fin}(\Omega,\Omega)$;

\item $C_{\lambda}(X)$ is $\sigma$-countably compact and
Lindel$\ddot{o}$f;

\item $C_{\lambda}(X)$ is $\sigma$-pseudocompact and
Lindel$\ddot{o}$f;

\item $C_{\lambda}(X)$ is $\sigma$-bounded and Lindel$\ddot{o}$f.

\item $C_{\lambda}(X)$ is homeomorphic to
$\bigcup\limits_{i=1}^{\infty} [-i,i]^{|D(X)|}$.

\end{enumerate}

\end{theorem}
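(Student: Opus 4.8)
The plan is to make Theorem~\ref{os1} (for $C_{\lambda}(X)$, Menger $\Leftrightarrow$ $\sigma$-compact) together with Theorem~\ref{nox} ($\sigma$-compact $\Leftrightarrow$ (8)) the hub of the whole argument, and then to show that each of the remaining twelve properties is squeezed between ``$\sigma$-compact'' (statement (1)) and ``Menger'' (statement (7)). Since these two endpoints already coincide for $C_{\lambda}(X)$, it suffices to check, for every listed property $P$, that $(1)\Rightarrow P$ and that $P\Rightarrow(7)$ (or $P\Rightarrow(1)$). First I would record the core implications among (1), (2), (6), (7): the arrows $\sigma\text{-compact}\Rightarrow\text{Alster}\Rightarrow\text{Menger}$ and $\sigma\text{-compact}\Rightarrow\text{Hurewicz}\Rightarrow\text{Menger}$ can simply be read off the Scheepers and Tall diagrams of Figures~1--2, while $(7)\Rightarrow(1)$ is exactly Theorem~\ref{os1}. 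Combined with Theorem~\ref{nox} this already yields $(1)\Leftrightarrow(2)\Leftrightarrow(6)\Leftrightarrow(7)\Leftrightarrow(8)$ in ZFC.

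Several of the other conditions then fall out by combining Ko\v{c}inac's characterization (Theorem~\ref{koc}: Menger $=$ Lindel\"of $+$ projectively Menger) with the quoted fact that every $\sigma$-bounded space is projectively Menger. Since compact $\Rightarrow$ countably compact $\Rightarrow$ pseudocompact $\Rightarrow$ bounded, a $\sigma$-compact Lindel\"of space is $\sigma$-countably compact, $\sigma$-pseudocompact and $\sigma$-bounded, giving $(1)\Rightarrow(11)\Rightarrow(12)\Rightarrow(13)$; conversely each of (11)--(13) forces ``$\sigma$-bounded $+$ Lindel\"of'', hence ``projectively Menger $+$ Lindel\"of'', hence Menger by Theorem~\ref{koc}. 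The same routine handles (5), since projective $\sigma$-compactness trivially implies projective Mengerness. For (4) I would use Hurewicz's game theorem quoted in the text: if TWO wins the $M$-game then ONE has no winning strategy, so $C_{\lambda}(X)$ is Menger, giving $(4)\Rightarrow(7)$; the reverse $(1)\Rightarrow(4)$ follows by exhausting $C_{\lambda}(X)$ with an increasing sequence of compact sets $K_n$ and letting TWO cover $K_n$ in the $n$th inning. Finally $(9)\Leftrightarrow(10)$ is the Just--Miller--Scheepers--Szeptycki equivalence ``$S_{fin}(\Omega,\Omega)\Leftrightarrow$ all finite powers Menger'', $(9)\Rightarrow(7)$ is the case $n=1$, and $(1)\Rightarrow(9)$ because finite powers of a $\sigma$-compact space are $\sigma$-compact and hence Menger.

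The genuinely delicate point, and the step I expect to cost the most, is the productively Lindel\"of statement (3); this is exactly where the hypothesis (CH) is forced to appear. The easy half $(2)\Rightarrow(3)$ is Alster's ZFC theorem that every Alster space is productively Lindel\"of. The converse $(3)\Rightarrow(1)$ is the obstacle: productively Lindel\"of does not imply Menger outright, so I would invoke the set-theoretic input -- the Alster/Tall(--Tsaban) results cited in the references -- that under (CH) a productively Lindel\"of space is Alster (equivalently, Menger in this setting). The care required here is to verify that $C_{\lambda}(X)$ actually satisfies the hypotheses (such as the weight restriction) under which that implication is available, i.e. to reduce the abstract productively Lindel\"of assumption to something the $\sigma$-compactness machinery of Theorems~\ref{os1}--\ref{nox} can consume.

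It remains to pin down $(1)\Leftrightarrow(14)$ by an explicit representation. Assuming (8), every $f\in C(X)$ is bounded because $X$ is pseudocompact, and $f$ is determined by its restriction to the dense set $D(X)$; moreover $C^{*}$-embeddedness makes every bounded function on the discrete set $D(X)$ the restriction of some $f\in C(X)$, so $f\mapsto f\vert D(X)$ is a bijection of $C(X)$ onto the set of all bounded real functions on $D(X)$. Because $\lambda$ consists of the finite subsets of $D(X)$, the $\lambda$-open topology is precisely the topology of pointwise convergence on $D(X)$, whence this bijection is a homeomorphism of $C_{\lambda}(X)$ onto $\bigcup_{i=1}^{\infty}[-i,i]^{|D(X)|}$ with the product topology. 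Each factor $[-i,i]^{|D(X)|}$ is compact, so the image is manifestly $\sigma$-compact, delivering both $(8)\Rightarrow(14)$ and $(14)\Rightarrow(1)$ and closing the loop through all fourteen statements.
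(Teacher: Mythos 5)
Your proposal is correct, and its skeleton is the same as the paper's: both arguments funnel all fourteen conditions through the hub $\sigma$-compact $\Leftrightarrow$ Menger (Theorem~\ref{os1}) together with Nokhrin's characterization (Theorem~\ref{nox}), both dispatch (5) and (11)--(13) via Ko$\check{c}$inac's Theorem~\ref{koc} plus the quoted fact that $\sigma$-bounded spaces are projectively Menger, and both settle $(9)\Leftrightarrow(10)$ by citing Just--Miller--Scheepers--Szeptycki. You differ in three spokes, each time in a more self-contained direction. For (4), the paper goes $(4)\Rightarrow(5)$ by citing Theorem 18 in \cite{tall}, whereas you go $(4)\Rightarrow(7)$ by observing that TWO having a winning strategy precludes ONE having one, and then invoking Hurewicz's game characterization of Menger quoted in the text; both are valid, and your $(1)\Rightarrow(4)$ strategy (cover the $n$th compact piece in the $n$th inning) is exactly the ``obvious'' argument the paper gestures at. For (14), the paper simply cites Theorem 5.5 in \cite{nokh}, whereas you reconstruct the homeomorphism $f\mapsto f\vert D(X)$ onto the bounded functions on $D(X)$ explicitly; your argument is sound (pseudocompactness gives boundedness, density of $D(X)$ gives injectivity, $C^*$-embedding gives surjectivity, and $\lambda=[D(X)]^{<\aleph_0}$ makes the $\lambda$-open topology the pointwise one), so you in effect reprove the special case of Nokhrin's theorem you need. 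Finally, on the one step you flagged as uncertain --- $(3)\Rightarrow(1)$ under $(CH)$ --- your worry about a weight restriction dissolves: the result to quote is Tall's Theorem 7 in \cite{tall} ($\mathfrak{b}=\aleph_1$ implies every productively Lindel$\ddot{o}$f space is Menger), which the paper itself records in the bullet list following the theorem and which carries no weight hypothesis; since $(CH)$ gives $\mathfrak{b}=\aleph_1$, productively Lindel$\ddot{o}$f implies Menger implies $\sigma$-compact with nothing left to check. The weight-$\le\aleph_1$ hypothesis is needed only for the stronger Alster conclusion, which your argument never requires.
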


\begin{proof}  $(1)\Rightarrow(2)$. It is obvious that every
$\sigma$-compact space is Alster.

$(1)\Rightarrow(4)$. It is obvious that every $\sigma$-compact
space has "TWO wins $M$-game".

$(2)\Rightarrow(3)$. Every Alster space is productively
Lindel$\ddot{o}$f \cite{al}.

$(2)\Rightarrow(5)$. Since Alster metrizable spaces are
$\sigma$-compact \cite{al}, then every Alster space is
projectively $\sigma$-compact and Lindel$\ddot{o}$f.

$(4)\Rightarrow(5)$. By Theorem 18 in \cite{tall}.

$(7)\Rightarrow(1)$. By Theorem \ref{os1}.

$(9)\Leftrightarrow(10)$. By Theorem 3.9 in \cite{jmss}.

 $(13)\Rightarrow(1)$. Every $\sigma$-bounded space is projectively Menger
 (Proposition 1.1 in \cite{arh}). By Theorem \ref{koc},
 $C_{\lambda}(X)$ is Menger. By Theorem \ref{os1}, $C_{\lambda}(X)$
 is $\sigma$-compact.

$(1)\Leftrightarrow(14)$. By Theorem 5.5 in \cite{nokh}.

The remaining implications are trivial and follows from the
definitions \cite{tall}.
\end{proof}

\begin{corollary} If $C_{\lambda}(X)$ is Menger. Then
$C_{\lambda}(X)$ is powerfully (productively) Lindel$\ddot{o}$f.
\end{corollary}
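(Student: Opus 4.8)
The plan is to read the Corollary off directly from Theorem~\ref{os2}. Since $C_{\lambda}(X)$ is Menger it satisfies statement (7) of that theorem, and hence every one of the fourteen equivalent statements; in particular $C_{\lambda}(X)$ is $\sigma$-compact (statement (1)) and Alster (statement (2)). The whole task therefore reduces to the purely covering-theoretic fact that an Alster space is at once productively Lindel$\ddot{o}$f and powerfully Lindel$\ddot{o}$f.

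For the productively Lindel$\ddot{o}$f half I would reuse the implication $(2)\Rightarrow(3)$ already quoted inside the proof of Theorem~\ref{os2}, namely that every Alster space is productively Lindel$\ddot{o}$f by Alster \cite{al}. For the powerfully Lindel$\ddot{o}$f half I would again invoke Alster's results \cite{al}, where the $\omega$th power of an Alster space is shown to be Lindel$\ddot{o}$f; these are precisely the two arrows issuing from the Alster node in the Tall Diagram of Figure~2. With both halves in hand, the Corollary is immediate.

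The single point that deserves attention is that the two conclusions should each be obtained through the Alster property rather than one from the other: productively Lindel$\ddot{o}$f does not visibly entail powerfully Lindel$\ddot{o}$f, since $X^{\omega}$ is not presented as a product $X\times Y$ with $Y$ Lindel$\ddot{o}$f to which productivity could be applied. Routing both statements through Alster (equivalently, through $\sigma$-compactness, from which one could instead argue directly that the countable power of a $\sigma$-compact space is Lindel$\ddot{o}$f) sidesteps this gap, and the Corollary then follows with no residual difficulty.
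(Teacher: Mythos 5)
Your proposal is correct and matches the paper's own proof: the paper likewise deduces from Theorem~\ref{os2} that $C_{\lambda}(X)$ is $\sigma$-compact, hence Alster, and then cites Alster's results \cite{al} that Alster spaces are powerfully (productively) Lindel\"{o}f. Your closing remark about routing both conclusions through the Alster property (rather than trying to derive powerful Lindel\"{o}fness from productive Lindel\"{o}fness) is a sensible precaution, and it is exactly how the paper's one-line argument works.
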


\begin{proof} If $C_{\lambda}(X)$ is Menger, then, by Theorem
\ref{os2}, $C_{\lambda}(X)$ is $\sigma$-compact and hence it is
Alster. But Alster spaces are powerfully (productively)
Lindel$\ddot{o}$f \cite{al}.
\end{proof}

\begin{corollary} $C_{k}(X)$ is Menger ($\sigma$-compact, Hurewicz, Alster)  if and only if $X$ is finite.
\end{corollary}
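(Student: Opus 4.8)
The plan is to deduce the corollary directly from Theorem \ref{os2} by exploiting the special form that the defining family $\lambda$ takes for the compact-open topology. Recall that $C_{k}(X)=C_{\lambda}(X)$ precisely when $\lambda$ is the family of all nonempty compact subsets of $X$; since every singleton is compact, this $\lambda$ is automatically a $\pi$-network, so the $T_1$ hypothesis of Theorem \ref{os2} is satisfied and the four listed properties are mutually equivalent and equivalent to condition (8).

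For the sufficiency I would observe that when $X$ is finite, $C_{k}(X)=C_{p}(X)$ is homeomorphic to the Euclidean space $\mathbb{R}^{|X|}$, which is $\sigma$-compact. By the equivalences in Theorem \ref{os2}, $\sigma$-compactness yields each of Menger, Hurewicz, and Alster at once, so the ``if'' direction is immediate.

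For the necessity I would assume that $C_{k}(X)$ enjoys one of the four properties and invoke condition (8) of Theorem \ref{os2}: then $\lambda$ must consist of exactly the finite subsets of $D(X)$, where $D(X)$ is the set of isolated points of $X$. Here, however, $\lambda$ is the family of all compact subsets of $X$, so these two families must coincide. The crucial step is to test this coincidence against singletons: for each $x\in X$ the set $\{x\}$ is compact, hence a finite subset of $D(X)$, which forces $x\in D(X)$. Consequently every point of $X$ is isolated and $X$ is discrete.

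Combining discreteness with the pseudocompactness also supplied by condition (8) finishes the argument: an infinite discrete space admits an unbounded continuous real-valued function and therefore cannot be pseudocompact, so $X$ must be finite. I expect the only point requiring genuine care---the main, if modest, obstacle---to be the identification of the compact-subset family with the finite-subsets-of-$D(X)$ family together with the singleton test that collapses $X$ onto its isolated points; everything else follows directly from Theorem \ref{os2}.
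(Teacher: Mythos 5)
Your proof is correct and takes essentially the route the paper intends: the corollary is left unproved as an immediate consequence of Theorem \ref{os2}, and your argument---applying equivalence (8) with $\lambda$ the family of all nonempty compact subsets, using the singleton test to conclude $X=D(X)$ is discrete, and then combining discreteness with pseudocompactness to force finiteness---is precisely the natural filling-in of that step. The converse, via $C_k(X)\cong\mathbb{R}^{|X|}$ being $\sigma$-compact (hence Menger, Hurewicz, and Alster) for finite $X$, is likewise the standard observation.
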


For the selection properties of the space $C_{\lambda}(X)$  (see
Fig. 2) we have next trivial corollaries.

\begin{corollary}  If $C_{\lambda}(X)$ is Rothberger (in particular, $S_1(\Omega,
\Gamma)$ or $S_1(\Omega, \Omega)$). Then $X=\emptyset$.
\end{corollary}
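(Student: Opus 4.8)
The plan is to reduce the Rothberger hypothesis to the already-established Menger/$\sigma$-compact classification of Theorem \ref{os2}, and then to exploit the fact that a Rothberger space is too ``thin'' to contain an interval. First I would note that Rothberger trivially implies Menger, since selecting a single set is a special case of selecting a finite set, i.e. $S_1(\mathcal{O},\mathcal{O})$ implies $S_{fin}(\mathcal{O},\mathcal{O})$. Hence a Rothberger $C_{\lambda}(X)$ is Menger, and Theorem \ref{os2} applies. From that theorem I would extract two facts: the equivalence $(7)\Leftrightarrow(14)$ yields a homeomorphism $C_{\lambda}(X)\cong\bigcup_{i=1}^{\infty}[-i,i]^{|D(X)|}$, and $(7)\Leftrightarrow(8)$ gives that $D(X)$ is dense in $X$.

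Next I would argue by contradiction. Suppose $X\neq\emptyset$. Since $D(X)$ is dense in $X$, it follows that $D(X)\neq\emptyset$, so $\kappa:=|D(X)|\ge 1$. Then $\bigcup_{i=1}^{\infty}[-i,i]^{\kappa}$ contains the cube $[-1,1]^{\kappa}$ (the summand $i=1$), which, being compact, is closed; fixing all but one coordinate at $0$ exhibits inside it a closed copy of the interval $[-1,1]\cong[0,1]$. Consequently $C_{\lambda}(X)$ contains a closed copy of $[0,1]$.

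Finally I would invoke that the Rothberger property $S_1(\mathcal{O},\mathcal{O})$ is hereditary with respect to closed subspaces. The standard argument runs as follows: given open covers $\mathcal{U}_n$ of a closed subspace $Y\subseteq Z$, replace each $U\in\mathcal{U}_n$ by $\widetilde{U}\cup(Z\setminus Y)$ with $\widetilde{U}$ open in $Z$ and $\widetilde{U}\cap Y=U$, apply $S_1(\mathcal{O},\mathcal{O})$ to these covers of $Z$, and discard the complement $Z\setminus Y$ from the selection; the retained traces cover $Y$. Thus the closed copy of $[0,1]$ would itself be Rothberger. But $[0,1]$ is \emph{not} Rothberger, since every Rothberger metric space has strong measure zero while $[0,1]$ does not. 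This contradiction forces $D(X)=\emptyset$, whence density of $D(X)$ gives $X=\overline{D(X)}=\emptyset$. The parenthetical special cases are immediate, as $S_1(\Omega,\Gamma)$ and $S_1(\Omega,\Omega)$ both imply $S_1(\mathcal{O},\mathcal{O})$.

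I expect the only non-routine ingredient to be the failure of the Rothberger property for $[0,1]$; the closed-hereditary property and the reduction through Theorem \ref{os2} are bookkeeping. Since the non-Rothbergerness of $[0,1]$ is classical, the corollary is essentially immediate once the homeomorphism type of $C_{\lambda}(X)$ is pinned down.
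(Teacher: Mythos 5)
Your proposal is correct and takes essentially the same route as the paper: reduce Rothberger to Menger, use Theorem \ref{os2} to produce a closed copy of an interval inside $C_{\lambda}(X)$ (the paper embeds $\mathbb{R}$ via the isolated point, you embed $[0,1]$ via the homeomorphism in item (14)), and contradict the fact that Rothberger metric spaces have strong measure zero. Your write-up is merely more explicit than the paper's, in particular in spelling out the closed-heredity of the Rothberger property, which the paper uses implicitly.
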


\begin{proof} If $C_{\lambda}(X)$ is Rothberger, then it is Menger
and, by Theorem \ref{os2}, $X$ contains an isolated point. Hence,
the real line $\mathbb{R}\subset C_{\lambda}(X)$. But every
Rothberger subset of the real line has strongly measure zero
\cite{rot}. It follows that $X=\emptyset$.
\end{proof}

\begin{corollary}  If $C_{\lambda}(X)$ has the
property $S_1(\Gamma,\mathcal{O})$ (in particular, $S_1(\Gamma,
\Gamma)$ or $S_1(\Gamma, \Omega)$). Then $X=\emptyset$.
\end{corollary}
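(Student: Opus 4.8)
The plan is to follow the template of the preceding corollary: first force $C_{\lambda}(X)$ to be $\sigma$-compact with an isolated point available in $X$, then exhibit inside $C_{\lambda}(X)$ a closed subspace that provably fails $S_1(\Gamma,\mathcal{O})$, and conclude by closed-heredity. First I would observe that $S_1(\Gamma,\mathcal{O})$ implies the Menger property: choosing a single set is in particular choosing a finite subfamily, so $S_1(\Gamma,\mathcal{O})\Rightarrow S_{fin}(\Gamma,\mathcal{O})\Rightarrow U_{fin}(\Gamma,\mathcal{O})$, and the latter is exactly Menger (see Figure~1 and \cite{jmss}). Hence, assuming $C_{\lambda}(X)$ has $S_1(\Gamma,\mathcal{O})$, Theorem~\ref{os2} applies: $C_{\lambda}(X)$ is $\sigma$-compact, $X$ is pseudocompact, $D(X)$ is a dense set of isolated points and $\lambda$ consists of the finite subsets of $D(X)$. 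If $X\neq\emptyset$, then $D(X)\neq\emptyset$, so $X$ has an isolated point $x_0$.

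Next I would embed the real line as a subspace. Since $x_0$ is isolated and $X$ is $T_1$, the singleton $\{x_0\}$ is clopen, so for each $r\in\mathbb{R}$ the function $g_r$ with $g_r(x_0)=r$ and $g_r\equiv 0$ off $x_0$ is continuous; as $\{x_0\}\in\lambda$, the map $r\mapsto g_r$ is a homeomorphic embedding $\mathbb{R}\hookrightarrow C_{\lambda}(X)$. Restricting it to the Cantor set $C\subseteq[0,1]\subseteq\mathbb{R}$ yields a copy of $C$ inside $C_{\lambda}(X)$ whose image is compact, hence closed in the Hausdorff space $C_{\lambda}(X)$. Since $S_1(\Gamma,\mathcal{O})$ is closed-hereditary (given relatively open $\gamma$-covers of a closed set $F$, enlarge each member by $Y\setminus F$ to obtain $\gamma$-covers of the whole space, select, and restrict back — a routine check), this closed copy of $C$ would then also satisfy $S_1(\Gamma,\mathcal{O})$.

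The hard part, and the crux of the argument, is to show that the Cantor set does \emph{not} satisfy $S_1(\Gamma,\mathcal{O})$, which at first looks surprising because $C$ is compact. I would prove this in the homeomorphic guise $Z=(\omega+1)^{\omega}$, the countable power of the one-point compactification of $\mathbb{N}$ (a zero-dimensional, perfect, compact metric space, hence homeomorphic to $C$). For $n,k\in\omega$ put $F^n_k=\{z\in Z: z(n)=k\}$ and $U^n_k=Z\setminus F^n_k$. Each $F^n_k$ is a nonempty clopen set and, for fixed $n$, the sets $F^n_k$ are pairwise disjoint, so each point lies in at most one of them; consequently $\mathcal{U}_n=\{U^n_k:k\in\omega\}$ is a nontrivial open $\gamma$-cover of $Z$. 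Given any selection $U^n_{k_n}\in\mathcal{U}_n$, the point $z^{*}=(k_n)_{n\in\omega}\in\omega^{\omega}\subseteq Z$ satisfies $z^{*}(n)=k_n$, so $z^{*}\in F^n_{k_n}$ and thus $z^{*}\notin U^n_{k_n}$ for every $n$; hence $\bigcup_n U^n_{k_n}=Z\setminus\{z^{*}\}\neq Z$. Therefore no selection covers $Z$, so $Z$, and with it $C$, fails $S_1(\Gamma,\mathcal{O})$.

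Combining the steps yields the contradiction: the closed subspace $C$ of $C_{\lambda}(X)$ both satisfies $S_1(\Gamma,\mathcal{O})$ (by heredity) and fails it (by the construction). Hence the assumption $X\neq\emptyset$ is untenable and $X=\emptyset$. The cases $S_1(\Gamma,\Gamma)$ and $S_1(\Gamma,\Omega)$ follow at once, since each implies $S_1(\Gamma,\mathcal{O})$. I expect the only genuinely delicate point to be the Cantor-set counterexample — the idea being to place the escaping diagonal sequence of the classical non-Menger argument for $\mathbb{N}^{\mathbb{N}}$ back inside a compact space by compactifying $\mathbb{N}$ — while the Menger reduction and closed-heredity are routine.
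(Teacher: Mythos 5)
Your proposal is correct, and its skeleton is exactly the paper's: deduce the Menger property from $S_1(\Gamma,\mathcal{O})$, apply Theorem~\ref{os2} to get an isolated point of a nonempty $X$, embed the Cantor set into $C_{\lambda}(X)$, and derive a contradiction because the Cantor set fails $S_1(\Gamma,\mathcal{O})$. Where you genuinely diverge is in execution: the paper disposes of the decisive fact, namely that $2^{\omega}$ is not in the class $S_1(\Gamma,\mathcal{O})$, by citing \cite{jmss}, and leaves the embedding and the heredity step implicit, whereas you prove all three. Your diagonal argument on $Z=(\omega+1)^{\omega}$ is the standard witness behind that citation and is correct: for fixed $n$ the sets $F^n_k=\{z: z(n)=k\}$, $k\in\omega$, are pairwise disjoint nonempty clopen sets, so the complements form an infinite open $\gamma$-cover, and any selection $U^n_{k_n}$ misses the point $z^{*}=(k_n)_{n}$, so no selection covers; your closed-heredity lemma (enlarging relatively open sets by $Y\setminus F$) is also sound, and you rightly note that compactness of the embedded copy supplies the closedness that makes heredity applicable --- a detail the paper glosses over. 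What your version buys is self-containment (no reliance on \cite{jmss} for the counterexample); what the paper's buys is brevity. One caveat, which you share with the paper rather than introduce: the opening reduction $S_1(\Gamma,\mathcal{O})\Rightarrow U_{fin}(\Gamma,\mathcal{O})$ ``$=$ Menger'' rests on the Scheepers Diagram, which Figure~1 states for Lindel$\ddot{o}$f spaces; for arbitrary spaces the last equivalence fails (for instance, $\omega_1$ satisfies $S_1(\Gamma,\mathcal{O})$, since all but finitely many members of any $\gamma$-cover of $\omega_1$ have bounded, hence countable, complement, yet $\omega_1$ is not Menger). Since the paper's own proof asserts the identical implication without comment, this is not a defect of your argument relative to the published one, but it is the one step in both proofs that deserves an explicit justification tailored to $C_{\lambda}(X)$.
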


\begin{proof} If $C_{\lambda}(X)$ has the
property $S_1(\Gamma,\mathcal{O})$, then it is Menger and, by
Theorem \ref{os2}, $X$ contains an isolated point. Hence, the
Cantor set $2^{\omega} \subset C_{\lambda}(X)$. But the Cantor
set, $2^{\omega}$, is not in the class $S_1(\Gamma,\mathcal{O})$
\cite{jmss}. It follows that $X=\emptyset$.
\end{proof}

Note that every $\sigma$-compact topological space is a member of
both the class $S_{fin}(\Omega,\Omega)$ and
$U_{fin}(\Gamma,\Gamma)$ (Theorem 2.2 in \cite{jmss}). It follows
that if $C_{\lambda}(X)$ is Menger then $C_{\lambda}(X)$ has the
properties $S_{fin}(\Omega,\Omega)$ and
$U_{fin}(\mathcal{O},\Gamma)$ (in particular,
$S_{fin}(\Gamma,\Omega)$ and $U_{fin}(\mathcal{O}, \Omega)$).

\begin{remark}
If $X$ is compact and $C_{\lambda}(X)$ is Menger, then
 $X$ is homeomorphic to $\beta(D)$, where $\beta(D)$ is  Stone-$\check{C}$ech compactification
of a discrete space $D$, and  $\lambda=[D]^{<\aleph_0}$.
\end{remark}

A Lindel$\ddot{o}$f space $Y$ is called a {\it Michael space}, if
$\omega^{\omega}\times Y$ is not Lindel$\ddot{o}$f.

\medskip

$\bullet$ (Repovs-Zdomskyy) If there exists a Michael space (this
follows from $\mathfrak{b}=\aleph_1$ or $\mathfrak{d} = Cov(M)$),
then every productively Lindel$\ddot{o}$f spaces has the Menger
property (Proposition 3.1 in \cite{rezd}).

$\bullet$ (Repovs-Zdomskyy) If $Add(\mathcal{M}) =\mathfrak{d}$,
then every productively Lindel$\ddot{o}$f space has the Hurewicz
property (Theorem 1.1 in \cite{rezd2}).

$\bullet$ (Zdomskyy) If $\mathfrak{u}=\aleph_1$, then every
productively Lindel$\ddot{o}$f space has the Hurewicz property.

$\bullet$ (Tall) $\mathfrak{d}=\aleph_1$ implies productively
Lindel$\ddot{o}$f spaces are Hurewicz. (Theorem 10 in
\cite{tall}).

$\bullet$ (Tall) $\mathfrak{b}=\aleph_1$ implies every
productively Lindel$\ddot{o}$of space is Menger (Theorem 7 in
\cite{tall}).

\begin{proposition} If $\mathfrak{b}=\aleph_1$ (or $Add(\mathcal{M})
=\mathfrak{d}$ or $\mathfrak{u}=\aleph_1$ or
$\mathfrak{d}=\aleph_1$), then every productively
Lindel$\ddot{o}$f
 space $C_{\lambda}(X)$ is $\sigma$-compact.
\end{proposition}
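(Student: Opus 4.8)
The plan is to reduce everything to Theorem~\ref{os2} by first establishing that under each of the listed cardinal hypotheses, every productively Lindel\"of space is Menger. This is exactly what the bulleted results immediately preceding the proposition provide: by Repov\v{s}--Zdomskyy (Proposition 3.1 in \cite{rezd}) the hypothesis $\mathfrak{b}=\aleph_1$ yields a Michael space, and hence every productively Lindel\"of space is Menger; the hypotheses $Add(\mathcal{M})=\mathfrak{d}$, $\mathfrak{u}=\aleph_1$, and $\mathfrak{d}=\aleph_1$ each force every productively Lindel\"of space to be Hurewicz (by \cite{rezd2}, Zdomskyy, and Tall's Theorem~10 in \cite{tall} respectively), and every Hurewicz space is Menger. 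So in all four cases the assumption delivers that $C_{\lambda}(X)$, being productively Lindel\"of, is Menger.

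Once Mengerness of $C_{\lambda}(X)$ is in hand, I would simply invoke the equivalence $(7)\Leftrightarrow(1)$ of Theorem~\ref{os2}: a Menger function space $C_{\lambda}(X)$ is $\sigma$-compact. Alternatively one could cite Theorem~\ref{os1} directly, which already states that $C_{\lambda}(X)$ is Menger if and only if it is $\sigma$-compact. Either route closes the argument in one line.

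I should record one small caveat about logical direction. The cited theorems are conditional implications of the form ``(cardinal hypothesis) $\Rightarrow$ (every productively Lindel\"of space is Menger/Hurewicz),'' so the proposition is really a menu of four independent sufficient conditions, each handled by the matching bullet. No genuine obstacle arises: the content of the proposition is entirely a packaging of the known transfer results together with the function-space characterization. The only point requiring care is to apply each bullet to the correct hypothesis and to note that Hurewicz implies Menger, so that the three Hurewicz-producing hypotheses also yield the Menger property needed to trigger Theorem~\ref{os2}. Thus the proof is short: assume $C_{\lambda}(X)$ is productively Lindel\"of, derive that it is Menger from whichever of the four cardinal hypotheses holds, and conclude $\sigma$-compactness.
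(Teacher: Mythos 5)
Your proposal is correct and follows essentially the same route as the paper: the paper's proof likewise observes that each cardinal hypothesis makes every productively Lindel\"of space Menger (via the bulleted results preceding the proposition) and then invokes Theorem~\ref{os1} to conclude $\sigma$-compactness. Your extra care in matching each hypothesis to its bullet and noting that Hurewicz implies Menger only makes explicit what the paper leaves implicit.
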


\begin{proof} If $\mathfrak{b}=\aleph_1$ (or $Add(\mathcal{M})
=\mathfrak{d}$ or $\mathfrak{u}=\aleph_1$ or
$\mathfrak{d}=\aleph_1$) and $C_{\lambda}(X)$ is productively
Lindel$\ddot{o}$f, then $C_{\lambda}(X)$ is Menger. By Theorem
\ref{os1}, $C_{\lambda}(X)$ is $\sigma$-compact.
\end{proof}

Denote by $C^*_p(\mathbb{N})$ the set of all bounded continuous
real-valued functions on $\mathbb{N}$ with the topology of
pointwise convergence.

\begin{theorem}
For a space $X$ the following statements are equivalent:

\begin{enumerate}

\item $C_{\lambda}(X)$ is indestructibly productively
Lindel$\ddot{o}$f;

\item $C_{\lambda}(X)$ is metrizable $\sigma$-compact;

\item $C_{\lambda}(X)$ is metrizable Menger;

\item $X$ is a pseudocompact, $D(X)$ is a dense $C^*$-embedded set
in $X$, family $\lambda$ consists of all finite subsets of $D(X)$,
where $D(X)$ is the countable set of all isolated points of $X$;

\item $C_{\lambda}(X)$ is homeomorphic to $C^*_p(\mathbb{N})$.

\end{enumerate}

\end{theorem}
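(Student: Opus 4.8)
My plan is to establish the ``function-space'' content as a cycle $(5)\Rightarrow(2)\Rightarrow(3)\Rightarrow(4)\Rightarrow(5)$ and then to splice in $(1)$ by proving $(2)\Rightarrow(1)$ and $(1)\Rightarrow(2)$ separately, with forcing entering only in the latter. Throughout I assume $X$ is infinite (if $D(X)$ is finite then $X=D(X)$ is finite discrete and $C_\lambda(X)=\mathbb{R}^{|X|}$, a degenerate case in which $(5)$ fails, so it is excluded). For the core: $(2)\Leftrightarrow(3)$ is immediate from Theorem~\ref{os1}, since for $C_\lambda(X)$ Mengerness and $\sigma$-compactness coincide, so ``metrizable Menger'' and ``metrizable $\sigma$-compact'' name the same class. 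For $(3)\Rightarrow(4)$ I would apply Theorem~\ref{os2}: condition (8) gives that $X$ is pseudocompact, $D(X)$ is dense and $C^*$-embedded and $\lambda=[D(X)]^{<\aleph_0}$, while condition (14) gives $C_\lambda(X)\cong\bigcup_i[-i,i]^{|D(X)|}$; metrizability then forces $|D(X)|\leq\aleph_0$, since for $|D(X)|\geq\aleph_1$ the closed subspace $[-1,1]^{|D(X)|}$ is not even first countable. For $(4)\Rightarrow(5)$ I would use that pseudocompactness makes every $f\in C(X)$ bounded and $C^*$-embeddedness makes $f\mapsto f\restriction D(X)$ a bijection of $C(X)$ onto the bounded functions on $D(X)\cong\mathbb{N}$; as $\lambda$ consists of the finite subsets of $D(X)$, this restriction is a homeomorphism of $C_\lambda(X)$ onto $C_p^*(\mathbb{N})$. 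Finally $(5)\Rightarrow(2)$ is clear, as $C_p^*(\mathbb{N})=\bigcup_i[-i,i]^\omega\subseteq\mathbb{R}^\omega$ is separable metrizable and $\sigma$-compact.

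For $(2)\Rightarrow(1)$ I would exploit that countably closed forcing adds no new reals, hence no new $\omega$-sequences of ground-model reals. A metrizable $\sigma$-compact $C_\lambda(X)\cong\bigcup_i[-i,i]^\omega$ is coded by reals, so in any countably closed extension $V[G]$ the reinterpreted space is literally unchanged: each block $[-i,i]^\omega$ acquires no new points and stays compact, so $C_\lambda(X)$ remains $\sigma$-compact in $V[G]$. Since a $\sigma$-compact space times a Lindel\"of space is a countable union of (compact $\times$ Lindel\"of) pieces and hence Lindel\"of, $\sigma$-compactness implies productive Lindel\"ofness in \textbf{ZFC}; thus $C_\lambda(X)$ is productively Lindel\"of in every countably closed extension, i.e.\ indestructibly productively Lindel\"of.

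The substantive direction is $(1)\Rightarrow(2)$, which I would split into extracting Mengerness and then bounding $|D(X)|$. For Mengerness, force with a countably closed poset collapsing the continuum to $\omega_1$ (e.g.\ $\mathrm{Coll}(\omega_1,2^{\aleph_0})$), so that $V[G]\models CH$ and in particular $\mathfrak{b}=\aleph_1$, whence a Michael space exists in $V[G]$. The reinterpreted $C_\lambda(X)$ is productively Lindel\"of in $V[G]$ by $(1)$, so by the Repovs--Zdomskyy result (Proposition 3.1 in \cite{rezd}) it is Menger in $V[G]$. Mengerness then descends to $V$: for any ground-model sequence of open covers a witnessing sequence of finite selections exists in $V[G]$, but each selection is a finite set of ground-model open sets (hence lies in $V$) and the whole $\omega$-sequence of selections lies in $V$ by countable closure; so $C_\lambda(X)$ is Menger in $V$. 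Theorem~\ref{os1} then makes it $\sigma$-compact, and Theorem~\ref{os2} supplies the structure of (4) and the homeomorphism $C_\lambda(X)\cong\bigcup_i[-i,i]^{|D(X)|}$.

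It remains to show $(1)$ forces $|D(X)|\leq\aleph_0$, and this is the key new lemma and the main obstacle. Suppose $|D(X)|\geq\aleph_1$; then $C_\lambda(X)$ contains the closed subspace $K=(2^{\omega_1})^V$ of ground-model $\{0,1\}$-valued functions on $\omega_1$ of the coordinates. Force with $\mathrm{Add}(\omega_1,1)$, which is countably closed and adds a new $g\colon\omega_1\to\{0,1\}$, $g\notin V$. The ground-model clopen sets $U_\alpha=\{f\in K:f(\alpha)=1-g(\alpha)\}$ cover $K$ (each $f\in V$ differs from $g$ somewhere), yet no countable subfamily $\{U_{\alpha_n}\}$ covers: by countable closure the sequence $\langle g(\alpha_n)\rangle_n$ lies in $V$, so there is $f\in K$ with $f(\alpha_n)=g(\alpha_n)$ for all $n$, missing every $U_{\alpha_n}$. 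Hence $K$, and therefore its closed superspace $C_\lambda(X)$, is not Lindel\"of in this extension, contradicting the Lindel\"ofness implied by $(1)$. Thus $|D(X)|\leq\aleph_0$, which together with $\sigma$-compactness yields $(2)$. The delicate point to get right throughout is that ``indestructibly productively Lindel\"of'' must be read as a property of the \emph{ground-model} space reinterpreted in $V[G]$ (not of $C_\lambda(X)$ recomputed there); this is exactly what legitimizes both the Repovs--Zdomskyy application and the closed-subspace argument, and it is paired with the repeated use of countable closure to transport finite and countable data back to $V$.
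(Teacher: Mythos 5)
Your proposal is correct, and it takes a genuinely more self-contained route than the paper, though the two share the same mathematical skeleton. The paper is citation-driven: it extracts Menger (hence, by Theorem~\ref{os1}, $\sigma$-compactness) from (1) by quoting Tall's theorem that indestructibly productively Lindel\"of spaces are projectively $\sigma$-compact; it bounds $|D(X)|$ by quoting Dias--Tall (Lemma 4.7 and Corollary 4.4 of \cite{tall3}), which say in effect that a Lindel\"of space containing a copy of $2^{\omega_1}$ is destructible; it gets $(2)\Rightarrow(1)$ from Aurichi--Tall's theorem that a metrizable space is indestructibly productively Lindel\"of iff it is $\sigma$-compact; and it gets $(5)\Leftrightarrow(2)$ from Nokhrin's Theorem 5.5. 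You instead re-derive each ingredient by an explicit forcing argument: Mengerness by collapsing the continuum with a countably closed poset so that $CH$ (hence a Michael space) holds in $V[G]$, applying Repov\v{s}--Zdomskyy there, and pulling the finite selections back to $V$ by countable closure; the bound $|D(X)|\le\aleph_0$ by the $\mathrm{Add}(\omega_1,1)$ cover $\{U_\alpha\}$, which is precisely the standard proof of the Dias--Tall destructibility of $2^{\omega_1}$ that the paper cites; $(2)\Rightarrow(1)$ by the observation that countably closed forcing adds no reals, so each block $[-i,i]^{\omega}$ remains compact; and $(4)\Rightarrow(5)$ by the restriction homeomorphism onto $C^*_p(\mathbb{N})$, which is Nokhrin's theorem in this special case. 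Your implication graph also differs (a cycle through $(5),(2),(3),(4)$ plus $(1)\Leftrightarrow(2)$, versus the paper's $(1)\Rightarrow(4)\Rightarrow(3)\Rightarrow(2)\Rightarrow(1)$ with $(5)\Leftrightarrow(2)$ attached), but that is cosmetic. What the paper's version buys is brevity; what yours buys is verifiability without the cited literature, an explicit treatment of the reinterpretation convention underlying ``indestructibly,'' and --- a genuine catch --- the exclusion of the degenerate case: for finite nonempty $X$ (or $\lambda$ a proper $\pi$-subnetwork of $[D(X)]^{<\aleph_0}$ generating the same topology) conditions (1)--(3) hold while (5) fails, since $\mathbb{R}^n$ is locally compact and $C^*_p(\mathbb{N})$ is not, so the theorem as stated does need your standing assumption that $X$ is infinite.
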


\begin{proof} $(1)\Rightarrow(4)$. By Theorem 9 in \cite{tall}, indestructibly productively Lindel$\ddot{o}$f spaces are projectively
$\sigma$-compact and hence Hurewicz and Menger. By Theorems
\ref{nox} and \ref{os1},  $X$ is a pseudocompact, $D(X)$ is a
dense $C^*$-embedded set in $X$ and family $\lambda$ consists of
all finite subsets of $D(X)$, where $D(X)$ is the set of all
isolated points of $X$.

Assume that $\kappa=|D(X)|>\aleph_0$. Then $C_{\lambda}(X,
\mathbb{I})$ is homeomorphic to the space $\mathbb{I}^{\kappa}$
(Theorem 5.5 in \cite{nokh}) where $\mathbb{I}=[-1,1]$. It follows
that the compact space $C_{\lambda}(X, \mathbb{I})$ includes a
copy of $2^{\omega_1}$. Note that every indestructibly
productively Lindel$\ddot{o}$f space is indestructibly
Lindel$\ddot{o}$f. By Lemma 4.7 and Corollary 4.4 in \cite{tall3},
$C_{\lambda}(X)$ is destructibility. It follows that
$|D(X)|\leq\aleph_0$.

$(4)\Rightarrow(3)$. By Theorem \ref{os2}, $C_{\lambda}(X)$ is
Menger. Clearly that, if $|D(X)|\leq \aleph_0$ then
$w(C_{\lambda}(X))= \aleph_0$ and hence $C_{\lambda}(X)$ is
metrizable.

$(3)\Rightarrow(2)$. By Theorem \ref{os1}.

$(2)\Rightarrow(1)$. By Theorem 7 in \cite{auta}, a metrizable
space is indestructibly productively Lindel$\ddot{o}$f if and only
if it is $\sigma$-compact.

$(5)\Leftrightarrow(2)$. By Theorem 5.5 in \cite{nokh}, if
$C_{\lambda}(X)$ is $\sigma$-compact then  $C_{\lambda}(X)$ is
homeomorphic to the space $\bigcup\limits_{i=1}^{\infty}
[-i,i]^{\kappa}$ where $\kappa=|D(X)|$. Since $C_{\lambda}(X)$ is
metrizable then $\kappa\leq \aleph_0$. It follows that
$C_{\lambda}(X)$ is homeomorphic to $\bigcup\limits_{i=1}^{\infty}
[-i,i]^{\aleph_0}$. Note that $\bigcup\limits_{i=1}^{\infty}
[-i,i]^{\aleph_0}$ is homeomorphic to $C^*_p(\mathbb{N})$.

\end{proof}

\begin{corollary} $C_k(X)$ is indestructibly productively
Lindel$\ddot{o}$f iff $X$ is finite.

\end{corollary}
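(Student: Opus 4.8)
The plan is to reduce this corollary entirely to results already established in the excerpt, since $C_k(X)$ is precisely the instance of $C_{\lambda}(X)$ in which $\lambda$ is the family of all compact subsets of $X$. That family is always a $\pi$-network, because every nonempty open set contains a point and singletons are compact; hence the preceding Theorem applies to $C_k(X)$ verbatim.

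For the forward direction I would assume that $C_k(X)$ is indestructibly productively Lindel$\ddot{o}$f. By the implication $(1)\Rightarrow(3)$ of the preceding Theorem, $C_k(X)$ is then metrizable Menger; in particular it is Menger. The earlier Corollary asserts that $C_k(X)$ is Menger if and only if $X$ is finite, so I would conclude immediately that $X$ is finite.

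For the converse, suppose $X$ is finite. Then every subset of $X$ is compact, so the compact-open topology coincides with the topology of pointwise convergence and $C_k(X)=C_p(X)$ is homeomorphic to $\mathbb{R}^{|X|}$. This space is simultaneously metrizable and $\sigma$-compact, so by the implication $(2)\Rightarrow(1)$ of the preceding Theorem (equivalently, by Theorem 7 in \cite{auta}, which characterizes indestructible productive Lindel$\ddot{o}$fness for metrizable spaces as $\sigma$-compactness), $C_k(X)$ is indestructibly productively Lindel$\ddot{o}$f.

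I expect no genuine obstacle here, as the content is carried entirely by the preceding Theorem together with the earlier Corollary on the Mengerness of $C_k(X)$. The only points requiring a moment's care are the verification that the compact-open topology truly falls under the $C_{\lambda}(X)$ framework as a $\pi$-network, and the observation that finiteness of $X$ collapses $C_k(X)$ to the finite-dimensional Euclidean space $\mathbb{R}^{|X|}$, which is manifestly metrizable $\sigma$-compact.
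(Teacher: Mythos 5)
Your proof is correct and is essentially the reduction the paper intends: the corollary is stated there without proof, as an immediate consequence of the preceding theorem, and both of your directions --- forward via $(1)\Rightarrow(3)$ together with the earlier corollary that $C_k(X)$ is Menger iff $X$ is finite, and backward via $C_k(X)\cong\mathbb{R}^{|X|}$ being metrizable $\sigma$-compact plus $(2)\Rightarrow(1)$ --- are valid instances of that reduction. Your preliminary check that the family of compact subsets is a $\pi$-network (since singletons are compact) is precisely what is needed for the theorem to apply to $C_k(X)$.
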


\begin{remark}
If $X$ is compact and $C_{\lambda}(X)$ is indestructibly
productively Lindel$\ddot{o}$f, then
 $X$ is homeomorphic to $\beta \mathbb{N}$, where $\beta \mathbb{N}$ is  Stone-$\check{C}$ech compactification
of the natural numbers $\mathbb{N}$, and
$\lambda=[\mathbb{N}]^{<\aleph_0}$.
\end{remark}

\bibliographystyle{model1a-num-names}
\bibliography{<your-bib-database>}







\end{document}